\documentclass[12pt,leqno]{amsart}

\usepackage[T1]{fontenc}

\usepackage[
  a4paper,
  top=1.2in,
  bottom=1.2in,
  left=1in,
  right=1in,
  headheight=0.5in,
  footskip=0.5in
]{geometry}

\usepackage[none]{hyphenat}  

\usepackage{amssymb}   
\usepackage{mathrsfs}  
\usepackage{accents}   
\usepackage{mathdots}  
\usepackage{nccmath}   
\usepackage{stmaryrd}  
\usepackage{marvosym}  

\usepackage{graphicx}  
\usepackage[dvipsnames]{xcolor}
\usepackage{tikz}     
\usepackage{float}    

\usepackage[format=plain,justification=centering]{caption}
\usepackage{subfig}   

\usepackage[shortlabels]{enumitem}
\setlist[itemize,enumerate]{leftmargin=11mm}

\usepackage{comment}

\usepackage[foot]{amsaddr}
\usepackage[nodayofweek]{datetime}

\usepackage[thmtools-compat]{keytheorems}

\usepackage{cite}

\usepackage{hyperref}
\hypersetup{
  colorlinks=true,
  linkcolor=black,
  urlcolor=Blue,
  citecolor=black
}

\usepackage[capitalise,nameinlink]{cleveref}

\crefname{conjecture}{Conjecture}{Conjectures}
\crefname{question}{Question}{Questions}
\crefname{observation}{Observation}{Observations}

\newkeytheorem{theorem}[parent=section]

\newkeytheorem{
  statement,lemma,proposition,corollary,conjecture,
  question,observation,claim,fact
}[sibling=theorem,style=plain]

\newkeytheorem{axiom,definition,example}[
  sibling=theorem,
  style=definition
]

\newcommand{\mf}{\mathfrak}
\newcommand{\mca}{\mathcal}

\newcommand{\bt}{\mathsf{B}}

\newcommand{\bigmath}[2][0.75]{%
  \mathord{\scalebox{#1}{$\displaystyle #2$}}%
}

\newcounter{tbox}
\crefname{tbox}{Statement}{Statements}

\newcommand{\sta}[1]{%
  \par\medskip\medskip
  \refstepcounter{tbox}%
  \noindent
  \parbox{\linewidth}{(\thetbox) \emph{#1}}%
  \par\vspace*{0.3cm}%
}

\newcommand{\mylongtitle}[1]{%
  \ifodd\value{page}%
    \protect\parbox{0.97\linewidth}{#1}\hfill
  \else
    \hfill\protect\parbox{0.97\linewidth}{#1}%
  \fi
}

\title{Off-diagonal Ramsey numbers of $\mathbf{3}$-graphs}

\author{Lior Gishboliner$^{\ast\dagger}$}
\author{Sepehr Hajebi$^{\ast\ddagger}$}
\author{Xiangyu Li$^{\ast}$}

\thanks{$^{\ast}$ Department of Mathematics, University of Toronto,
  Toronto, Ontario, Canada.}
\thanks{$^{\dagger}$ Research supported in part by an NSERC Discovery Grant.}
\thanks{$^{\ddagger}$ Supported by the Canada Postdoctoral Research
  Award (CPRA) program.}

\date{\today}

\begin{document}

\maketitle
\begin{abstract}
We prove a conjecture of Conlon, Fox, Gunby, He, Mubayi, Suk, Verstra\"{e}te, and Yu that for a $3$-graph $H$, the off-diagonal Ramsey number $R(H,n)$ is polynomial in $n$ if and only if $H$ is isomorphic to a subgraph of the ``depth-$t$ iterated blowup'' $\bt_t$ for an integer $t\geq 0$. Here, the $3$-graphs $\bt_t$ are defined recursively: $\bt_0$ has one vertex, and $\bt_t$ for $t\geq 1$ is obtained from the disjoint union of three copies of $\bt_{t-1}$ by adding all transversal hyperedges.
\end{abstract}

\setcounter{section}{-1}
\section{AI declaration}
No generative AI was used in any capacity in the production of this work. All ideas, proofs, and writing were developed entirely by the authors in the traditional style. ChatGPT was used solely to identify typographical errors in the final draft.

\section{Introduction}\label{sec:introduction}
We write $\mathbb N$ for the set of positive integers, and $[k]$ for the set of positive integers at most $k\in \mathbb Z$ (so $[k]=\varnothing$ whenever $k\leq 0$). Hypergraphs in this paper have finite vertex sets and nonempty hyperedges. The vertex set and hyperedge set of a hypergraph $G$ are denoted by $V(G)$ and $E(G)$, respectively, and we denote $G$ by the ordered pair $(V(G),E(G))$. For $k\in\mathbb N$, a \textit{$k$-uniform} hypergraph, also called a \textit{$k$-graph}, is one in which every hyperedge has cardinality $k$. A $k$-graph is \textit{complete} if all $k$-subsets of its vertices are hyperedges. A \textit{subgraph} of a hypergraph $G$ is a hypergraph $G'$ with $V(G')\subseteq V(G)$ and $E(G')\subseteq E(G)$. A \textit{copy} of a hypergraph $H$ in $G$ is a subgraph of $G$ isomorphic to $H$.

Let $k,n\geq 2$ be integers and let $H$ be a $k$-graph. The (off-diagonal) \textit{Ramsey number} $R(H,n)$ is the smallest $N\in\mathbb N$ for which every red/blue coloring of the hyperedges of the $N$-vertex complete $k$-graph contains either a red copy of $H$ or a blue copy of the $n$-vertex complete $k$-graph. For $k=2$ (i.e. for graphs), it is well known that $R(H,n)$ is polynomial in $n$ for every fixed $H$. For $k=3$, however, this already fails when $H$ is the complete $3$-graph on four vertices (see, for instance, \cite{CFS}). The natural question is therefore which $3$-graphs $H$ retain a polynomial bound in $n$ for $R(H,n)$. This question was first posed explicitly by Conlon at an AIM workshop in 2015 \cite{Conlon2015,mainconjecture}, although its origins go back to work of Erd\H{o}s and Hajnal \cite{ErdosHajnal1972} in 1972.

In this paper, we give a full classification of all such $3$-graphs. For an integer $t\geq 0$, define the $3$-graph $\bt_t$ recursively as follows: let $\bt_0$ be the $3$-graph with a single vertex (and no hyperedges), and for $t\geq 1$, construct $\bt_t$ by taking three disjoint copies of $\bt_{t-1}$ and adding every hyperedge with one vertex in each copy. A $3$-graph $H$ is \textit{iteratively tripartite} if $H$ is isomorphic to a subgraph of $\bt_t$ for some $t\geq 0$.

Erd\H{o}s and Hajnal \cite[Theorem~7]{ErdosHajnal1972} proved that $R(H,n)$ is polynomial in $n$ whenever $H$ is iteratively tripartite (see also \cite{newbounds} and \cite{FoxHe} for related work). Conlon, Fox, Gunby, He, Mubayi, Suk, Verstra\"ete, and Yu \cite{mainconjecture} conjectured that the converse is also true:

\begin{conjecture}[Conlon, Fox, Gunby, He, Mubayi, Suk, Verstra\"ete, Yu {\cite{mainconjecture}}]\label{conj:main}
For a $3$-graph $H$, there exists $c\in\mathbb N$ such that $R(H,n)\leq n^c$ for every integer $n\geq 2$ (if and) only if $H$ is iteratively tripartite.
\end{conjecture}

We prove \Cref{conj:main}. Our main result, \Cref{thm:main} below, is stated without Ramsey numbers. A \textit{stable set} in a hypergraph $G$ is a set $S$ of vertices that contains no hyperedge, and $\alpha(G)$ is the maximum cardinality of a stable set in $G$. For a $3$-graph $H$ and an integer $n\geq 2$, the Ramsey number $R(H,n)$ may thus be defined as the minimum $N\in \mathbb N$ such that every $N$-vertex $3$-graph with $\alpha(G)<n$ contains a copy of $H$.

\begin{restatable}{theorem}{main}\label{thm:main}
For all $c,h\in\mathbb N$ with $h\geq 2$, and every integer $n\geq 3^{{(30h(c+1))}^{6h}}$, there exists a $3$-graph $G$ such that $|V(G)|=n^c$, $\alpha(G)<n$, and every subgraph of $G$ with $h$ hyperedges is iteratively tripartite.
\end{restatable}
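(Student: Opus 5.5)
We will build $G$ by an iterated construction that mimics $\bt_t$ from the "outside" while inserting randomness at every level. This makes the small subgraphs of $G$ look locally like subgraphs of an iterated blowup, while random sparsity kills all large stable sets. Concretely, fix $t$ (roughly $t\approx c$, with a little slack) and a base size $m=n^{1/\text{something}}$. The vertex set of $G$ will be the set of leaves of a rooted tree of depth $t$ in which each internal node has $m$ children, so $|V(G)|=m^t$; we choose parameters so that this equals $n^c$. For each internal node $x$ we place on the children of $x$ a random $3$-graph $F_x$ that has no short Berge cycles (girth larger than $h$) and has independence number less than some $s$. Such an $F_x$ exists by the standard deletion method: take a random $3$-graph with edge probability about $m^{-2+\varepsilon}$ and delete a hyperedge from each short cycle. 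A triple of leaves $\{u,v,w\}$ whose branches first separate at the common node $x$ into three distinct children $a,b,c$ becomes a hyperedge of $G$ exactly when $abc\in E(F_x)$. Triples whose branching splits as $2+1$ at $x$ get no hyperedge.

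\textbf{Stable sets.} The first step is to bound $\alpha(G)$. For a stable set $S$, define at each node $x$ the set of children whose subtrees meet $S$; this set is stable in $F_x$, so it has size less than $s$. Inducting down the tree gives $|S|<s^t$. We choose $s=m^{\delta}$, with $\delta$ so small that $s^t<n$, while keeping $m^t=n^c$. This forces $m=n^{c/t}$ and $s^t=m^{\delta t}=n^{c\delta}$, so we need $c\delta<1$. The deletion method gives $s\approx m^{1-\varepsilon'}\log m$, which is far too large. Instead we use a version with edge probability near $1$ on a bounded-size gadget: we want $\alpha(F_x)\le m^{\delta}$ with girth $>h$, and Erdős-type constructions for $3$-graphs give $\alpha\approx m^{1/2+o(1)}$ at girth $h$. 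So a single level saves only a factor of roughly $\sqrt m$. We compensate with depth. Take $t$ large and $m$ fixed as a power of $n$. Then $\alpha(G)<s^t=m^{t(1/2+o(1))}$ and $|V|=m^t$, so $|V|\approx\alpha^{2-o(1)}$, which gives only quadratic growth. That is not enough for $n^c$. The fix is to nest the construction in the number of colors of branching: the per-level random graph must itself have polynomially small independence number relative to its own size to a power $1/c$. So we take each $F_x$ to be a 3-graph of high girth with $\alpha(F_x)\le |V(F_x)|^{1/(c+1)}$. Such graphs exist with girth $>h$ when the uniformity is large, but here it must be $3$. Recursing the whole construction as $F_x$, and bootstrapping from $c$ to $c+1$, is the natural route, and the huge lower bound on $n$ (a tower-type exponent $(30h(c+1))^{6h}$) suggests exactly this sort of iteration.

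\textbf{Small subgraphs.} The second step is to show that every subgraph $J$ of $G$ with $h$ hyperedges is iteratively tripartite. We project $J$ along the tree. At the top node $x$ where $J$ branches, every hyperedge of $J$ either lies inside one child subtree or is transversal, with its three vertices in three distinct children. The transversal hyperedges project to a subgraph of $F_x$ with at most $h$ edges, which by high girth is a Berge forest. The key lemma is then: if $J$ is a $3$-graph whose vertex set is partitioned into parts $P_a$ such that the "quotient" hyperedges form a Berge forest $T$ and each $J[P_a]$ is iteratively tripartite, then $J$ is iteratively tripartite. We prove it by induction on the forest, peeling a leaf edge $abc$ of $T$. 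This requires embedding the pieces so that all edges between $P_a,P_b,P_c$ become transversal in a single blowup level. That works because a forest edge meets the rest of the forest in at most one part, and $\bt_{t+1}$ contains $\bt_t$ together with transversal copies. This lemma is the main obstacle: gluing along a shared part requires the embedding of $P_a$ used for the edge $abc$ to be compatible with its embedding elsewhere. The approach we would try first is to strengthen the induction hypothesis to say that the image of each part can be placed inside a single copy of $\bt_{t'}$ at any prescribed depth. Combined with choosing parameters (depth up to about $h$ levels per gluing, giving the $6h$ exponent), this completes the proof.
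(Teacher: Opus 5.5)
Your small-subgraph half is essentially sound, but the stable-set half has a genuine gap, and none of your proposed repairs closes it.

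In your tree construction, a set of leaves is stable exactly when, at every internal node $x$, the set of children whose subtrees it meets is stable in $F_x$. So picking a maximum stable set of children at every node yields a stable set of leaves, and your bound $|S|<s^t$ is essentially an equality: $\alpha(G)\ge\alpha_0^{\,t}$ with $\alpha_0=\min_x\alpha(F_x)$.

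Girth larger than $h\ge 2$ forces each $F_x$ to be linear, so $F_x$ has at most $\binom{m}{2}/3\le m^2/6$ hyperedges. The alteration argument (keep each vertex with probability $\sqrt{2/m}$, then delete one vertex from every surviving hyperedge) then gives $\alpha(F_x)\ge\frac{2\sqrt2}{3}\sqrt m$. Hence $|V(G)|=m^t\le(9/8)^t\alpha(G)^2$. If also $|V(G)|=n^c$ and $\alpha(G)<n$, then $m>\alpha_0^{\,c}\ge(\frac{2\sqrt2}{3})^c m^{c/2}$, which fails for every $c\ge 3$ and $m\ge 2$.

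The same estimate shows that $3$-graphs of girth larger than $h$ with $\alpha(F_x)\le|V(F_x)|^{1/(c+1)}$ do not exist, so that repair is unavailable. Recursing the construction into the $F_x$ does not bootstrap either. Since $\alpha$ is multiplicative under this substitution, $\log|V(G)|/\log\alpha(G)=\log m/\log\alpha(F)$. The tree therefore has exactly the exponent of the gadget plugged into it, and passing from $c$ to $c+1$ would need the theorem itself as input. Your proposal thus has no mechanism for making $\alpha$ polynomially small with an arbitrary exponent while keeping small subgraphs iteratively tripartite.

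Conversely, the step you single out as the main obstacle is routine. Call a map $V(J)\to[3]^t$ a homomorphism if it sends hyperedges to hyperedges of $\bt_t$. Concatenating a homomorphism of the quotient with homomorphisms of the parts (padded to a common length) gives a homomorphism of $J$: internal hyperedges agree on the first block, and transversal ones are already certified there. Appending an injective code in $[3]^q$ then turns any homomorphism into an embedding into $\bt_{t+q}$. No forest structure or compatibility condition is needed.

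The paper puts all its effort into stable sets, using a different object: $\Omega=\bt_t^\Gamma$, where $\Gamma$ has vertex set $\binom{[\eta]}{h}$ with $\eta=30h(c+1)$, and hyperedges $e_i=\{S:i\in S\}$.
\begin{itemize}
\item \emph{Local structure.} Any $h$ hyperedges of $\Omega$ have certificates $i_f$ lying in a common $h$-set $S_0$, so projecting to coordinate $S_0$ is a homomorphism into $\bt_t$.
\item \emph{Reduction to a tensor power.} Shearer's inequality gives $\mf{t}_n(\Omega)\le\mf{t}_n(\bt_t^s)^{\eta/h}$ with $s=\binom{\eta-1}{h-1}$.
\item \emph{Counting in $\bt_t^s$.} A container argument gives $\mf{t}_n(\bt_t^s)<(3^{st}/n^{1/30})^n$. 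It is driven by the Fox--Lov\'asz arithmetic triangle removal lemma: every dense $X$ contains three large sets all of whose transversals are hyperedges.
\item \emph{Final step.} A first-moment count produces an $n^c$-subset of $V(\Omega)$ with no stable $n$-set.
\end{itemize}
The exponent $6h$ in the hypothesis on $n$ comes from $2s<\eta^h$ and the requirement $n\ge 3^{(2s)^6}$ in the counting step, not from gluing depth.
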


There has been considerable work on \Cref{conj:main} preceding our resolution of it in this paper. To mention a few:

\begin{itemize}[itemsep=1mm]
    \item As potential troublemakers for \Cref{conj:main}, \textit{linear} $3$-graphs (those in which every two distinct hyperedges intersect in at most one vertex) were also expected to have polynomial off-diagonal Ramsey numbers. Conlon, Fox, Gunby, He, Mubayi, Suk, and Verstra\"ete \cite{onoffdiagonal} refuted that prediction. Further constructions in higher uniformities were later discovered by He, Nie, Wigderson, and Yu \cite{linear}.

\item A \textit{tight path} in a $3$-graph is a sequence of hyperedges in which consecutive ones intersect in two vertices, and a \textit{tight component} is a maximal set of hyperedges any two of which can be joined by a tight path. In \cite{mainconjecture}, the authors  proved \Cref{conj:main} for $3$-graphs with at most two tight components.

\item Very recently, Conlon, He, Nie, and Post \cite{newbounds} proved the conjecture for $3$-graphs with at most five tight components. They also showed that the full conjecture reduces to the case of linear $3$-graphs. (So the linear case, once expected to supply further polynomial examples, turned out to be where the bulk of the difficulty remained.)
\end{itemize}
\medskip

From \Cref{thm:main}, we derive the following explicit superpolynomial lower bound (which we did not put any effort into optimizing):

\begin{corollary}\label{cor:quantlower}
Let $H$ be a $3$-graph that is not iteratively tripartite and let $h=|E(H)|$. For every integer $n\geq 3^{(120h)^{6h}}$, we have
$$R(H,n)>3^{\frac{1}{60h}(\log_3 n)^{1+1/(6h)}}.$$
\end{corollary}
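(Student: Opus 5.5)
We derive \Cref{cor:quantlower} directly from \Cref{thm:main}, by choosing $c$ as a function of $n$ and then passing to a smaller clique size $m$. The first observation is that since $H$ is not iteratively tripartite, any $3$-graph $G$ in which every subgraph with $h$ hyperedges is iteratively tripartite contains no copy of $H$. Indeed, a copy of $H$ in $G$ would be a subgraph with exactly $h$ hyperedges, hence isomorphic to a subgraph of some $\bt_t$, and then so would $H$. We may assume $h\ge 2$: if $h\le 1$, then $H$ is a single hyperedge plus isolated vertices, or has no hyperedges at all, and in either case $H$ embeds in some $\bt_t$, because $\bt_t$ has $3^t$ vertices and contains a hyperedge once $t\ge1$. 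So if \Cref{thm:main} supplies $G$ with $|V(G)|=m^c$ and $\alpha(G)<m$, then $R(H,m)>m^c$, and $R$ is monotone in the second argument.

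Next we pick the parameters. Given $n$, we want to maximize $m^c$ subject to $m\le n$ and the hypothesis $m\ge 3^{(30h(c+1))^{6h}}$. The natural choice is $m=n$ and the largest $c$ with $(30h(c+1))^{6h}\le \log_3 n$, namely
$$c=\Bigl\lfloor \frac{(\log_3 n)^{1/(6h)}}{30h}\Bigr\rfloor-1 .$$
The assumption $n\ge 3^{(120h)^{6h}}$ gives $(\log_3 n)^{1/(6h)}\ge 120h$, so $c\ge 3$; in particular $c\in\mathbb N$. With this $c$ the hypothesis of \Cref{thm:main} holds for $n$, and we obtain $R(H,n)>n^c=3^{c\log_3 n}$.

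Finally we compare exponents. Write $x=(\log_3 n)^{1/(6h)}/(30h)$, so that $x\ge 4$. Then
$$c\ge x-2\ge x/2=\frac{(\log_3 n)^{1/(6h)}}{60h}.$$
Multiplying by $\log_3 n$ gives exactly $c\log_3 n\ge \frac{1}{60h}(\log_3 n)^{1+1/(6h)}$, which is the claimed bound. The only real obstacle is this constant bookkeeping: checking that the floor and the $-1$ cost at most a factor of $2$ given $x\ge4$, and that the threshold $(120h)^{6h}$ is exactly what makes $c\ge 1$ with room to spare.
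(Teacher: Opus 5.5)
Your proposal is correct and is essentially the paper's proof: you make the same choice $c=\lfloor K\rfloor-1$ with $K=(\log_3 n)^{1/(6h)}/(30h)\geq 4$, apply \Cref{thm:main} with clique size $n$ itself, and use the same estimate $c>K-2\geq K/2$. The announced detour through a smaller clique size $m$ and monotonicity of $R$ is never used (you take $m=n$), so it can be dropped.
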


\begin{proof}
Note that $h\geq 2$. Let $L=\log_3 n$, let $K=L^{1/(6h)}/(30h)$, and let $c=\lfloor K\rfloor-1$.
Then
$n=3^L\geq 3^{(30h(c+1))^{6h}}$. By \Cref{thm:main}, there is a $3$-graph on $n^c$ vertices with no copy of $H$ and no stable set of cardinality $n$. Moreover, note that $c=\lfloor K\rfloor-1>K-2\geq K/2$ because $K\geq 4$. Hence
$$R(H,n)>n^c=3^{c\log_3n}> 3^{(K\log_3n)/2}= 3^{\frac{1}{60h}L^{1+1/(6h)}}.$$
\end{proof}

The rest of this paper is organized as follows. In \Cref{sec:powers}, we reduce the proof of \Cref{thm:main} to its two main ingredients, \Cref{lem:shearerapp,lem:indcounting}. We then prove the first lemma in \Cref{sec:shearer}, and the second in \Cref{sec:counting}.

\section{Raising hypergraphs to hypergraph powers}\label{sec:powers}

For sets $A,B$, we denote the set of functions from $A$ to $B$ by $B^A$. If $A=[n]$ for some $n\in \mathbb N$, we write $B^n$ instead of $B^{[n]}$, and identify each $x\in B^n$ with the ordered $n$-tuple $(x(1),\ldots,x(n))$. Thus, $B^n$ is identical to the $n$-fold Cartesian product of $B$. For instance, note that for each $t\geq 1$, the $3$-graph $\bt_t$ may equivalently be defined to have vertex set $V(\bt_t)=[3]^t$, and a $3$-subset $\{x,y,z\}$ of $[3]^t$ is a hyperedge of $\bt_t$ if and only if there exists $i\in[t]$ such that $x(j)=y(j)=z(j)$ for all $j\in[i-1]$, and $\{x(i),y(i),z(i)\}=\{1,2,3\}$. This definition is what we will use in the remainder of this paper.
\medskip

Let $k\in\mathbb N$ and let $G=(V,E)$ be a $k$-graph. For every $n\in \mathbb N$, the \textit{$n^{th}$ tensor power} of $G$, denoted $G^n$, is the $k$-graph with
$V(G^n)=V^n$
and
$$E(G^n)=\left\{S\subseteq V^n:|S|=k,\ \{x(i):x\in S\}\in E\text{ for every }i\in[n]\right\}.$$
So, the hyperedges of $G^n$ are exactly those $k$-subsets of $V^n$ whose projection on every ``coordinate'' $i\in [n]$ is a hyperedge of $G$.
\medskip

The key notion in our construction of the $3$-graph $G$ in \Cref{thm:main} is an extension of tensor powers in which a $k$-graph is raised to the power of an arbitrary hypergraph. For $k\in\mathbb N$, a $k$-graph $G=(V_1,E_1)$ and a hypergraph $\Gamma=(V_2,E_2)$, let $G^\Gamma$ be the $k$-graph with $$V\left(G^\Gamma\right)=V_1^{V_2}$$
and
$$E\left(G^\Gamma\right)=\bigcup_{e\in E_2}\left\{S\subseteq V_1^{V_2}:|S|=k,\ \{x(v):x\in S\}\in E_1\text{ for all }v\in e\right\}.$$
In other words, the hyperedges of $G^\Gamma$ are exactly those $k$-subsets of $V_1^{V_2}$ whose projections on every coordinate in \textit{some} hyperedge $e$ of $\Gamma$ are hyperedges of $G$. For instance, when $\Gamma=([n],\{[n]\})$, then $G^\Gamma=G^n$.
\medskip

The two lemmas below are the main ingredients in the proof of \Cref{thm:main}. We need a few definitions before stating them. For a hypergraph $G$, the \textit{degree} of a vertex $v\in V(G)$ is the number of hyperedges of $G$ containing $v$. For $n\in \mathbb N$, an ordered $n$-tuple $(v_1,\ldots,v_n)$ of not necessarily distinct vertices in $G$ is \textit{stable} if its support $\{v_1,\ldots,v_n\}$ is a stable set. The set of all stable $n$-tuples in $G$ is denoted $\mf T_n(G)$, and we write $\mf t_n(G)=|\mf T_n(G)|$.
\medskip

Our first lemma gives an upper bound on the number of stable tuples in a hypergraph power in terms of its restrictions to the hyperedges of the exponent:

\begin{restatable}{lemma}{shearerapp}\label{lem:shearerapp}
    Let $k,d\in \mathbb N$, let $G=(V_1,E_1)$ be a $k$-graph, and let $\Gamma=(V_2,E_2)$ be a hypergraph with minimum degree at least $d$. Then, for every $n\in \mathbb N$, we have
    $$\mf{t}_n\! \left(G^{\Gamma}\right)\leq \left(\prod_{e\in E_2}\mf{t}_n\! \left(G^{|e|}\right)\right)^{\! 1/d}.$$
\end{restatable}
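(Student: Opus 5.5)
This is a Shearer-type bound, and the plan is to prove it via the entropy form of Shearer's lemma. Think of a stable $n$-tuple $(x_1,\ldots,x_n)$ in $G^\Gamma$ as a single function $X\colon V_2\to V_1^n$, where $X(v)=(x_1(v),\ldots,x_n(v))$. Let $X$ be uniformly random over $\mf T_n(G^\Gamma)$, so that $H(X)=\log \mf t_n(G^\Gamma)$. For $e\in E_2$ write $X_e=(X(v))_{v\in e}$. Each vertex of $V_2$ lies in at least $d$ sets of the family $E_2$, so Shearer's lemma gives
$$d\cdot H(X)\le \sum_{e\in E_2} H(X_e).$$
It then suffices to show that $X_e$ takes at most $\mf t_n(G^{|e|})$ values. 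Since $H(X_e)$ is at most the log of the number of values, this gives $H(X_e)\le \log \mf t_n(G^{|e|})$, and exponentiating yields the lemma.

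The key step is to identify the support of $X_e$. Fix $e\in E_2$. The restriction $(x_1|_e,\ldots,x_n|_e)$ is an $n$-tuple of vertices of $V_1^{e}$, which we identify with vertices of $G^{|e|}$ via a bijection $e\cong[|e|]$. I claim this $n$-tuple is stable in $G^{|e|}$. Suppose some $k$ distinct restrictions $x_{i_1}|_e,\ldots,x_{i_k}|_e$ form a hyperedge of $G^{|e|}$, meaning their projection on each coordinate $v\in e$ is a hyperedge of $G$. These restrictions are distinct, so the full functions $x_{i_1},\ldots,x_{i_k}$ are distinct too. Their projection on every $v\in e$ is the same set as before, hence a hyperedge of $E_1$. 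By the definition of $E(G^\Gamma)$, witnessed by this $e$, the set $\{x_{i_1},\ldots,x_{i_k}\}$ is a hyperedge of $G^\Gamma$. It is contained in the support of a stable tuple, which is a contradiction. So the map $X\mapsto X_e$ sends $\mf T_n(G^\Gamma)$ into (a relabelling of) $\mf T_n(G^{|e|})$, and $X_e$ has at most $\mf t_n(G^{|e|})$ possible values.

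The main point needing care is the distinctness bookkeeping. Tuples are allowed to repeat vertices, and a hyperedge of $G^{|e|}$ needs $k$ distinct restrictions. The argument above handles this by choosing distinct restrictions first, whose preimages are then automatically distinct. No converse direction is needed.

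Finally, Shearer's lemma in the form $d\,H(X)\le\sum_{e}H(X_e)$, valid for any family of subsets covering every coordinate at least $d$ times, is standard. If needed, it follows from the chain rule together with the fact that conditioning reduces entropy, using an ordering of $V_2$. This completes the plan.
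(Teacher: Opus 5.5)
Your proof is correct and is essentially the paper's argument. It uses the same transposition of a stable $n$-tuple into a function $V_2\to V_1^n$, the same check that its restriction to each $e\in E_2$ is a stable $n$-tuple of $G^{|e|}$ (with distinctness handled the same way), and Shearer's inequality, except that you use the entropy form where the paper cites the equivalent counting form $|\mathcal{X}|\le\bigl(\prod_{e}|\{x|_e:x\in\mathcal{X}\}|\bigr)^{1/d}$; the only omission is the trivial case $\mf{t}_n(G^{\Gamma})=0$ (possible when $k=1$), where the uniform distribution is undefined but the inequality holds automatically.
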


The second lemma gives an upper bound on the number of stable tuples in tensor powers of $\bt_t$.

\begin{restatable}{lemma}{indcounting}\label{lem:indcounting}
    For all $n,s\in \mathbb N$ with $s\geq 8$ and $n\geq 3^{(2s)^6}$, there exists $t\in \mathbb N$ such that
    $$\mf{t}_{n}(\bt_t^s)<\left(\mfrac{3^{st}}{n^{1/30}}\right)^{\! \! n}.$$
\end{restatable}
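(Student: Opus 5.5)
The plan is to bound the density $\mf t_n(\bt_t^s)/3^{stn}$ of stable $n$-tuples among all $n$-tuples in $V(\bt_t^s)=([3]^t)^s$, by a recursion on the ternary-tree structure of $\bt_t$, and then to choose $t$ in terms of $n$ and $s$. A vertex $x$ of $\bt_t^s$ is an $s$-tuple of words of length $t$ over $[3]$. Group vertices by their \emph{first-level profile}, i.e.\ the vector $a=(x(1)(1),\ldots,x(s)(1))\in[3]^s$ of first letters in each coordinate. Each class $X_a$ induces a copy of $\bt_{t-1}^s$. Take three vertices in classes $a,b,c$ with $\{a_j,b_j,c_j\}=[3]$ for every $j$. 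Then they form a hyperedge of $\bt_t^s$, since every coordinate splits at level $1$.

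Hence, for a stable tuple $T$, the following hold:
\begin{enumerate}[(i)]
\item the restriction of $T$ to each class is a stable tuple of $\bt_{t-1}^s$;
\item the set $P\subseteq[3]^s$ of occupied profiles is stable in $\bt_1^s$.
\end{enumerate}
Summing over the stable sets $P$ and over the ways to distribute the $n$ labels among the classes gives a recursive inequality of the form
$$g_t(n)\;\le\;\sum_{P}\ \sum_{(n_a)_{a\in P}}\binom{n}{(n_a)_a}3^{-sn}\prod_{a\in P}g_{t-1}(n_a),\qquad g_t(n):=\mf t_n(\bt_t^s)/3^{stn}.$$

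Unrolling this recursion over the tree of depth $t$ expresses $\log g_t(n)$ as a sum over tree nodes. A node carrying $m$ labels contributes an overhead of at most the number of choices of $P$, and a gain of roughly $m\log(3^s/\max|P|)$. The overhead is nonzero only at nodes with $m\ge 2$, and the total overhead is controlled by counting nodes. The gain is counted per label and per level: at depth $\ell$ at most $3^{s\ell}$ nodes exist, so nodes carrying fewer than $M$ labels account for at most $3^{s\ell}M$ labels. Hence most labels sit in heavy nodes for about $\log_{3^s}(n/M)$ levels. Choosing $t$ around $\log_{3^s} n$ then yields a bound of the form $n^{-\varepsilon n}$. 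The hypotheses $n\ge 3^{(2s)^6}$ and $s\ge 8$ are what make the overhead terms (of order $2^{3^s}$ per heavy node) negligible.

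The main obstacle is that the naive version of (ii) is far too weak. In $[3]^s$ there are stable sets of density tending to $1$: for instance, the profiles with more than $s/3$ ones, since a fully transversal triple has exactly $s$ ones in total. So a per-level gain of $\log(3^s/\max|P|)$ decays with $s$ and cannot give an exponent $1/30$ independent of $s$. To overcome this I would not split all coordinates at the same level. Instead I would exploit hyperedges of $\bt_t^s$ whose coordinates split at \emph{different} depths $i_j$. Concretely, I would process the $s$ coordinates in a staggered fashion, or pass to a single ``merged'' tree whose levels cycle through the coordinates. The aim is to show that at each refinement step a constant fraction of the labels sitting in heavy nodes lose a factor $3^{-\Omega(1)}$ each, uniformly in $s$. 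This is where I would expect the real work, and the specific constant $1/30$ and the $(2s)^6$ threshold, to come from.

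Once that improved per-step density bound is in hand, the remaining steps are bookkeeping: summing the gains over roughly $\frac{1}{s}\log_3 n$ levels per coordinate, times $s$ coordinates, and subtracting the overheads. Together these should give $\mf t_n(\bt_t^s)<(3^{st}n^{-1/30})^n$ for a suitable $t\approx \log_3 n$.
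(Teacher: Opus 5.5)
Your proposal has a genuine gap. The step you flag as ``where I would expect the real work'' is essentially the whole lemma, and nothing in the proposal supplies it.

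\textbf{The missing ingredient.} You need a supersaturation statement for $\bt_t^s$ that is polynomial, with exponent independent of $s$. Every $X\subseteq V(\bt_t^s)$ of density at least $1/d$ must contain $X_1,X_2,X_3$, each of density at least $d^{-15}$, all of whose transversals are hyperedges. The paper proves this for $2(2t)^s\le d\le (3^t/(2s))^{1/14}$, and does not get it from the tree structure at all.
It identifies $([3]^t)^s$ with $\mathbb F_3^{st}$ and applies the Fox--Lov\'asz polynomial arithmetic triangle removal lemma. This is a deep input resting on the polynomial-method cap-set bounds.
\begin{enumerate}[(1)]
\item The removal lemma gives at least $3^{2st}/d^{14}$ cap-set triples in $X^3$.
\item At most $s\,3^{2st-t}$ of these are constant on some length-$t$ block. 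This is negligible once $3^t\ge 2sd^{14}$.
\item Every other triple is a hyperedge of $\bt_t^s$, because in each block the first position where the three words disagree is rainbow. This is exactly how splits at \emph{different} depths in different coordinates get used, all at once.
\item Pigeonholing over prefix certificates costs only a factor $(2t)^s$ and yields $X_1,X_2,X_3$.
\end{enumerate}

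\textbf{From supersaturation to the lemma.} The lemma then follows from a container branching argument, not a recursion. Every stable set misses one of $X_1,X_2,X_3$, and each branch removes at least $3^{st}/d^{15}$ vertices. So with $d=n^{1/15}$ there are at most $3^n$ containers of size at most $3^{st}/d$, giving $\mf t_n(\bt_t^s)<(3^{st+1}/n^{1/15})^n$. The choice $t=(2s)^{k+2}$, where $3^{(2s)^k}\le n<3^{(2s)^{k+1}}$, reconciles the two constraints on $d$; this is where $n\ge 3^{(2s)^6}$ enters.

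In this scheme polynomial dependence is essential. With a supersaturation function $f(\varepsilon)$ in place of $\varepsilon^{15}$, the branching produces $3^{1/f(\varepsilon)}$ containers, so you need $f(n^{-c})\ge 1/n$. Your ``staggered'' or ``merged-tree'' per-level scheme gives no indication of how to achieve this. A hyperedge of $\bt_t^s$ must split in all $s$ coordinates simultaneously, so the obstruction is global, not local to a level.

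\textbf{Your diagnosis of the naive recursion.} It reaches the right conclusion for the wrong reason. Stable sets in $\bt_1^s$ do not have density tending to $1$. Each profile lies in $2^{s-1}$ of the $6^{s-1}$ hyperedges, so a stable set misses at least $3^{s-1}$ profiles and has density at most $2/3$. This is attained by $\{a:a_1\neq 3\}$, while your example has density about $1/2$. So the per-label, per-level gain is at least $\log(3/2)$ and does not decay with $s$.

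The real defect is that conditions (i)--(ii) only see hyperedges in which all $s$ coordinates split at the same depth. Consider tuples whose labels lie in distinct depth-$\ell$ nodes and whose first coordinate has its first $\ell$ letters in $\{1,2\}$. All such tuples satisfy (i)--(ii). Taking $\ell\approx 2\log_{3^s}n$, they already number about $(3^{st}n^{-O(1/s)})^n$. So no bookkeeping can extract more than an exponent of order $1/s$ from those constraints. That is fatal, because in the proof of \Cref{thm:main} one needs $s=\binom{\eta-1}{h-1}$ to grow with $c$.
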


With \Cref{lem:shearerapp,lem:indcounting} in hand, we are now ready to prove the main result of this paper, which we restate:

\main*

\begin{proof}[Proof (assuming \Cref{lem:shearerapp,lem:indcounting})]
Write $\eta=30h(c+1)$ and let $n\geq 3^{\eta^{6h}}$ be a fixed integer.
Let
$$s=\binom{\eta-1}{h-1}.$$
Then $s\geq \eta-1\geq 119$. Also, $2s\leq 2(\eta-1)^{h-1}<\eta^{h}$, and thus $n\geq 3^{\eta^{6h}}>3^{(2s)^6}$. Therefore, by \Cref{lem:indcounting},

\sta{\label{st:countingapp} There exists $t\in \mathbb N$ such that $\mf{t}_{n}(\bt_t^s)<\left(\mfrac{3^{st}}{n^{1/30}}\right)^{\! \! n}$.}

Henceforth, let $t\in \mathbb N$ be as in \eqref{st:countingapp}. For each $i\in [\eta]$, define
$e_i=\left\{S\subseteq [\eta]: |S|=h,\ i\in S\right\}$. Let $\Gamma$ be the hypergraph with
$V(\Gamma)=\left\{S\subseteq [\eta]: |S|=h\right\}$ and $E(\Gamma)=\{e_i:i\in [\eta]\}$. Then $\Gamma$ is an $s$-graph with $\eta$ hyperedges in which every vertex has degree $h$.
\medskip

Let
$\Omega=\bt_t^{\Gamma}$. Note that $\Omega$ is a $3$-graph with
$|V(\Omega)|=3^{t\binom{\eta}{h}}=3^{st\eta/h}$. We further claim that:

\sta{\label{st:Hfree} Every subgraph $H$ of $\Omega$ with $h$ hyperedges is iteratively tripartite.}

For every $f\in E(H)$, choose $i_f\in[\eta]$ such that $\{x(S):x\in f\}\in E(\bt_t)$ for every $S\in e_{i_f}$. Choose an $h$-subset $S_0$ of $[\eta]$ containing $\{i_f:f\in E(H)\}$ (this is possible since $|E(H)|=h$). Then $S_0\in e_{i_f}$ for every $f\in E(H)$, and so $x\mapsto x(S_0)$ maps every hyperedge of $H$ to a hyperedge of $\bt_t$. Choose $q\in\mathbb N$ with $3^q\geq|V(H)|$, and an injection $x\mapsto \hat{x}$ from $V(H)$ to $[3]^q$. For every $x\in V(H)$, define $x^+\in [3]^{t+q}$ such that
$$x^+\big|_{[t]}=x(S_0)\quad \text{ and }\quad x^+(t+i)=\hat{x}(i) \text{ for every }i\in [q].$$
So, $x^+$ is obtained by ``concatenating'' $x(S_0)$ and $\hat{x}$ to form an element of $[3]^{t+q}$. Then the map $x\mapsto x^+$ is injective, and sends every hyperedge of $H$ to a hyperedge of $\bt_{t+q}$. Hence, $H$ is isomorphic to a subgraph of $\bt_{t+q}$. This proves \eqref{st:Hfree}.
\medskip

Now, let $N=n^c$. Note that by \eqref{st:countingapp} and since $\mf t_n(\bt_t^s)\geq 1$, we have $3^{st}>n^{1/30}$, and so
$$|V(\Omega)|=(3^{st})^{\eta/h}>n^{\eta/(30h)}=n^{c+1}>N.$$ We prove that there exists an $N$-subset $Z$ of $V(\Omega)$ such that $\alpha(\Omega[Z])<n$; this, together with \eqref{st:Hfree}, would imply that $G=\Omega[Z]$ is the desired $3$-graph.

Suppose not. Let $\mca{I}$ be the set of all pairs $(Z,S)$ where $Z$ is an $N$-subset of $V(\Omega)$ and $S$ is an $n$-subset of $Z$ which is a stable set in $\Omega$. Then, since $\alpha(\Omega[Z])\geq n$ for every $N$-subset $Z$ of $V(\Omega)$, it follows on one hand that
$$|\mca{I}|\geq \binom{3^{st\eta/h}}{N}.$$
On the other hand\footnote{Note that, since $\mf{t}_n(\Omega)$ counts stable \textit{$n$-tuples}, we could save a factor of $n!$ in the upper bound on $|\mca{I}|$. But that would not make a difference in the proof, so we omit it.},
$$|\mca{I}|\leq \mf{t}_n(\Omega)\binom{3^{st\eta/h}-n}{N-n}.$$
We deduce that
$$\mf{t}_n(\Omega)\geq \dfrac{\bigmath{\binom{3^{st\eta/h}}{N}}}{\bigmath{\binom{3^{st\eta/h}-n}{N-n}}}
=\dfrac{\bigmath{\binom{3^{st\eta/h}}{n}}}{\bigmath{\binom{N}{n}}}\geq \dfrac{\left(\mfrac{3^{st\eta/h}}{n}\right)^{\! n}}{N^n}
=\left(\mfrac{3^{st}}{n^{\frac{(c+1)h}{\eta}}}\right)^{\! n\eta/h}
=\left(\mfrac{3^{st}}{n^{1/30}}\right)^{\! n\eta/h}.$$
On the other hand, since $\Gamma$ is an $s$-graph with $\eta$ hyperedges in which every vertex has degree exactly $h$, by \Cref{lem:shearerapp} applied to $\bt_t$ and $\Gamma$, we have
$\mf{t}_n(\Omega)=\mf{t}_n\!\left(\bt_t^{\Gamma}\right)\leq \mf{t}_n(\bt_t^s)^{\eta/h}$. But now
$$\mf{t}_n(\bt_t^s)\geq \left(\mfrac{3^{st}}{n^{1/30}}\right)^{\! n};$$
contrary to \eqref{st:countingapp}. This completes the proof of \Cref{thm:main}.
\end{proof}

It remains to prove \Cref{lem:shearerapp,lem:indcounting}, which we will do in \Cref{sec:shearer,sec:counting}.

\section{Using Shearer's inequality}\label{sec:shearer}

Here, we prove \Cref{lem:shearerapp}. The proof relies on the ``counting form'' of Shearer's inequality, which was proved by Shearer in 1978, although it first appeared in print in \cite{originalshearer}. (See \cite{shearer} for further discussion.)

\begin{lemma}[See {\cite[Corollary~3.2]{shearer}}]\label{lem:shearer}
    Let $\Gamma=(V,E)$ be a hypergraph with minimum degree at least $d\in \mathbb N$. Then, for every finite set $A$ and every $\mca{X}\subseteq A^{V}$, we have
    $$|\mca{X}|\leq \left(\prod_{e\in E} \big|\{x\big|_{e}:x\in \mca{X}\}\big|\right)^{\! 1/d}.$$
\end{lemma}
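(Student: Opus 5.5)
This is the counting form of Shearer's inequality. I would prove it by the standard entropy argument: first reduce it to the entropy form of Shearer's lemma, then prove that form using the chain rule. If $\mca X=\varnothing$ there is nothing to prove, so assume $\mca X\neq\varnothing$. Let $X$ be a uniformly random element of $\mca X$, viewed as the random vector $(X_v)_{v\in V}$ with values in $A^V$. Then $H(X)=\log|\mca X|$. For each $e\in E$, the restriction $X_e=X|_e$ takes values in $\{x|_e:x\in\mca X\}$, so
$$H(X_e)\leq \log\big|\{x|_e:x\in\mca X\}\big|.$$
It therefore suffices to show the entropy inequality
$$d\,H(X)\leq \sum_{e\in E}H(X_e).$$
Exponentiating this and combining it with the bound above gives the claimed inequality.

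To prove the entropy inequality, I fix an arbitrary linear order $v_1<\dots<v_m$ on $V$ and write $X_{<v}$ for $(X_u)_{u<v}$. By the chain rule,
$$H(X)=\sum_{v\in V}H(X_v\mid X_{<v}).$$
Applying the chain rule inside a single hyperedge $e$, with the induced order, gives
$$H(X_e)=\sum_{v\in e}H\big(X_v\mid (X_u)_{u\in e,\,u<v}\big).$$
Conditioning on more variables cannot increase entropy, and $\{u\in e:u<v\}\subseteq\{u:u<v\}$. Hence each summand is at least $H(X_v\mid X_{<v})$, and so
$$H(X_e)\geq\sum_{v\in e}H(X_v\mid X_{<v}).$$

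Summing over $e\in E$, each term $H(X_v\mid X_{<v})$ appears $\deg(v)\geq d$ times. All these terms are nonnegative, so
$$\sum_{e\in E}H(X_e)\geq \sum_{v\in V}\deg(v)\,H(X_v\mid X_{<v})\geq d\sum_{v\in V}H(X_v\mid X_{<v})=d\,H(X),$$
which is the entropy inequality. There is no genuinely hard step. The only points needing care are the direction of the conditioning inequality, since restricting to $e$ conditions on fewer variables, and the use of nonnegativity of conditional entropy, which is what lets us replace $\deg(v)$ by the weaker bound $d$.
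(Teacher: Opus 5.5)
Your proof is correct. It is the standard entropy proof of Shearer's lemma: the chain rule along a fixed vertex order, the fact that conditioning on fewer variables cannot decrease entropy, and summing over hyperedges using $\deg(v)\geq d$ together with nonnegativity of conditional entropy. The paper does not prove this lemma itself but cites it from the literature, where essentially this same entropy argument is the usual proof, so there is no different route to compare.
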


Let us now prove \Cref{lem:shearerapp}, which we restate:

\shearerapp*

\begin{proof}
Let $\mca{V}=V\big(G^{\Gamma}\big)=V_1^{V_2}$. For each $x\in \mca{V}^{n}$ and every $i\in [n]$, let
$$x_i=x(i)\in \mca{V}.$$
Let $A=V_1^{n}$. For each $x\in \mca{V}^{n}$, define $\hat{x}\in A^{V_2}$ with the following rule: for every $v\in V_2$, we have
$$\hat{x}(v)=(x_1(v),\ldots,x_n(v))\in A.$$
Then the map $x\mapsto \hat{x}$ is bijective from $\mca{V}^{n}$ to $A^{V_2}$. Let
$$\mca{X}=\left\{\hat{x}:x\in \mf{T}_n\big(G^{\Gamma}\big)\right\},$$
and for every $e\in E_2$, let
$$\mca{X}_e=\left\{\hat{x}\big|_e:x\in \mf{T}_n\big(G^{\Gamma}\big)\right\}.$$
It follows that

\sta{\label{st:Xtranspose}$\mca{X}\subseteq A^{V_2}$ with $|\mca{X}|=|\mf{T}_n\big(G^{\Gamma}\big)|=\mf{t}_n\! \left(G^{\Gamma}\right)$.}

Moreover, we show that:

\sta{\label{st:tuplerestriction} $|\mca{X}_e|\leq \mf{t}_n(G^{|e|})$ for all $e\in E_2$.}

Let $e=\{v_1,\ldots,v_m\}\in E_2$. For each $y\in \mca{X}_e$, define $\check{y}\in (V_1^m)^n$ as follows: let $x\in \mf{T}_n\big(G^{\Gamma}\big)$ such that $y=\hat{x}\big|_e$. Then, for every $i\in [n]$, we have
$$\check{y}(i)=(x_i(v_1),\ldots, x_i(v_m))\in V_1^m.$$
We claim that the map $y\mapsto \check{y}$ is injective from $\mca{X}_e$ into $\mf{T}_n\big(G^m\big)$. Let $y,y'\in \mca{X}_e$ with $\check{y}=\check{y}'$. Let $x,x'\in \mf{T}_n\big(G^{\Gamma}\big)$ such that $y=\hat{x}\big|_e$ and $y'=\hat{x}'\big|_e$. Then, from $\check{y}=\check{y}'$, it follows that for each $i\in [n]$, we have $x_i(v_j)=x'_i(v_j)$ for all $j\in [m]$; that is,
$x_i\big|_e=x'_i\big|_e$. Thus, $\hat{x}\big|_e=\hat{x}'\big|_e$, and so $y=y'$. Now, suppose that $\check{y}\notin \mf{T}_n\big(G^m\big)$ for some $y\in\mca{X}_e$. Then there exists $I\subseteq[n]$ with $|I|=k$ such that
$$\{\check{y}(i):i\in I\}\in E(G^m).$$
 Choose $x\in\mf{T}_n\big(G^{\Gamma}\big)$ such that $y=\hat{x}\big|_e$. Then $|\{x_i:i\in I\}|=|\{\check{y}(i):i\in I\}|=k$, and for every $j\in [m]$, we have
$$\{x_i(v_j):i\in I\}\in E_1.$$
But now since $e\in E_2$, it follows that $\{x_i:i\in I\}\in E(G^{\Gamma})$, contrary to the choice of $x\in\mf{T}_n\big(G^{\Gamma}\big)$.
The claim follows, and thus $|\mca{X}_e|\leq |\mf{T}_n\big(G^m\big)|=\mf{t}_n\big(G^m\big)$. This proves \eqref{st:tuplerestriction}.
\medskip

From \eqref{st:Xtranspose}, \eqref{st:tuplerestriction}, and \Cref{lem:shearer}, we deduce that
$$\mf{t}_n\! \left(G^{\Gamma}\right)=|\mca{X}|\leq \left(\prod_{e\in E_2} |\mca{X}_e|\right)^{\! 1/d}\leq \left(\prod_{e\in E_2}\mf{t}_n\! \left(G^{|e|}\right)\right)^{\! 1/d}.$$
This completes the proof of \Cref{lem:shearerapp}.
\end{proof}

\section{Counting stable tuples via cap-set triples}\label{sec:counting}

In this last section, we prove \Cref{lem:indcounting}, starting with a definition. For $M\in\mathbb N$, a \textit{cap-set triple in $[3]^M$} is an ordered triple $(x,y,z)$ such that for every $i\in[M]$, either $x(i)=y(i)=z(i)$ or $\{x(i),y(i),z(i)\}=\{1,2,3\}$. We denote the set of these triples by $\mca C_M$. In particular, $(x,x,x)\in\mca C_M$ for every $x\in[3]^M$. (Identifying $[3]$ with $\mathbb{F}_3$, these are precisely the triples $(x,y,z)\in (\mathbb{F}_3^M)^3$ satisfying $x+y+z=0$, but we do not need this algebraic interpretation.) We say that cap-set triples $(x_1,y_1,z_1),\ldots,(x_m,y_m,z_m)$ are \textit{disjoint} if $x_1,\ldots, x_m$ are pairwise distinct, $y_1,\ldots, y_m$ are pairwise distinct, and $z_1,\ldots, z_m$ are pairwise distinct. (Note that the three sets of entries $\{x_1,\ldots, x_m\}, \{y_1,\ldots, y_m\}$ and $\{z_1,\ldots, z_m\}$ need not be disjoint from one another.)
\medskip

We use the following consequence of the ``arithmetic triangle removal theorem'' of Fox and Lov\'asz \cite{FoxLovasz}. (Their result gives an exponent smaller than $14$.)

\begin{theorem}[Fox and Lov\'asz {\cite[Theorem~3 for $p=3$]{FoxLovasz}}]\label{thm:FoxLovasz} Let $m,M\in \mathbb N$ and let $\varepsilon=m/3^{M}$. Let $(x_1,y_1,z_1),\ldots, (x_m,y_m,z_m)$ be $m$ pairwise disjoint cap-set triples in $[3]^M$. Then there are at least $\varepsilon^{14} 3^{2M}$ cap-set triples in $\{x_1,\ldots, x_m\}\times \{y_1,\ldots, y_m\}\times \{z_1,\ldots, z_m\}$.
\end{theorem}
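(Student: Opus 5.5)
We take \Cref{thm:FoxLovasz} as a consequence of the arithmetic triangle removal lemma of Fox and Lov\'asz for $\mathbb F_3$, and the plan is a reduction to that lemma. First we identify $[3]$ with $\mathbb F_3$, so that cap-set triples are exactly the solutions of $x+y+z=0$ in $(\mathbb F_3^M)^3$. We write $X=\{x_1,\ldots,x_m\}$, $Y=\{y_1,\ldots,y_m\}$, $Z=\{z_1,\ldots,z_m\}$. We use the removal lemma in the form: if at least $\delta 3^M$ elements must be deleted from $X\cup Y\cup Z$ (deleting each from its respective set) to destroy all solutions in $X\times Y\times Z$, then there are at least $(\delta/3)^{C}3^{2M}$ solutions in $X\times Y\times Z$, where $C<14$.

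\textbf{Step 1: the deletion number is at least $\varepsilon 3^M$.} Deleting a single element $a$ from $X$ destroys at most one of the given triples, since the $x_i$ are pairwise distinct; the same holds for deletions from $Y$ and from $Z$. Hence at least $m$ deletions are needed to destroy the $m$ given triples, and the removal lemma yields at least $(\varepsilon/3)^{C}3^{2M}$ cap-set triples in $X\times Y\times Z$.

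\textbf{Step 2: removing the constant factor by tensoring.} The factor $3^{-C}$ means the bound from Step 1 does not directly give $\varepsilon^{14}3^{2M}$ when $\varepsilon$ is close to $1$. To fix this we use a tensor-power trick. For $k\in\mathbb N$ we consider the $m^k$ triples in $[3]^{kM}$ obtained by concatenating $k$ of the given triples. We check the following facts.
\begin{itemize}
\item Each concatenation is a cap-set triple, because the cap-set condition is coordinatewise.
\item The concatenated triples are pairwise disjoint, because concatenations are equal only if all their blocks are equal.
\item Their density is $m^k/3^{kM}=\varepsilon^k$.
\item The cap-set triples in $X^k\times Y^k\times Z^k$ are exactly the concatenations of cap-set triples in $X\times Y\times Z$, again by the coordinatewise condition. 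So if $T$ is the number of cap-set triples in $X\times Y\times Z$, the number in the product is $T^k$.
\end{itemize}
Applying Step 1 in dimension $kM$ gives $T^k\ge(\varepsilon^k/3)^{C}3^{2kM}$. Taking $k$-th roots and letting $k\to\infty$ gives $T\ge\varepsilon^{C}3^{2M}\ge\varepsilon^{14}3^{2M}$, using $\varepsilon\le 1$ and $C\le 14$.

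The main obstacle is matching the exact formulation and constants of Fox and Lov\'asz's Theorem~3 for $p=3$. In particular, we need to confirm that their hypothesis is stated in terms of deleting elements and that their conclusion has the form $(\delta/3)^{C}3^{2M}$, up to harmless constants. The tensoring in Step 2 absorbs any constant factor of this kind, so the argument is robust to those details.
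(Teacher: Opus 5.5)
Your argument is correct, but the paper does not prove this statement at all: it cites Theorem~3 of Fox and Lov\'asz directly. That theorem is literally this ``pairwise disjoint triples'' counting statement, with exponent $C_3\approx 13.9<14$.

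You instead start from their headline removal lemma and reduce to it in three moves. That lemma has $\delta=(\epsilon/3)^{C_3}$, with deletions made from $X$, $Y$, $Z$ separately, so your Step~1 input is exactly as you assumed. You then observe that $m$ disjoint triples force at least $m$ deletions. Finally, you use a tensor-power trick to remove the loss $3^{-C}$.

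All four facts in Step~2 check out, and $\varepsilon\le 1$ because the $x_i$ are distinct. The strict versus non-strict inequality and integrality issues in contraposing the removal lemma are harmless. You can apply it with any $\epsilon'<\varepsilon$ and let $\epsilon'\to\varepsilon$, or just let the limit in $k$ absorb them.

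What your route buys is a clean observation: the removal form and the matching form are equivalent up to constants, and the factor $1/3$ costs nothing once you tensor. What it costs is directness. In Fox and Lov\'asz the logic runs the other way: they prove the matching statement first, from the tri-colored sum-free set bounds. They then deduce the removal lemma by taking a maximal family of disjoint triangles, and that is exactly where their $\epsilon/3$ comes from. So your proof is a valid reduction from a published black box, but the black box is itself derived from the statement you are establishing. Citing Theorem~3 directly, as the paper does, is the shorter path.
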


\begin{corollary}\label{cor:FoxLovasz}
   For all $M\in \mathbb N$ and every $X\subseteq [3]^M$, there are at least $|X|^{14}/3^{12M}$ cap-set triples in $X^3$.
\end{corollary}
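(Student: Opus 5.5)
We apply \Cref{thm:FoxLovasz} to a family of disjoint cap-set triples obtained from $X$ by translation. Identify $[3]$ with $\mathbb F_3$, so that the cap-set triples in $[3]^M$ are exactly the triples $(x,y,z)$ with $x+y+z=0$. (The paper avoids this identification, but it is harmless here.)

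Let $X\subseteq[3]^M$ be nonempty; the claim is trivial otherwise. Fix an element $a$ of $\mathbb F_3^M$, and consider the triples $(x, x+a, x+2a)$ for $x\in X$. These satisfy $x+(x+a)+(x+2a)=3x+3a=0$, so each is a cap-set triple. They are pairwise disjoint: their first entries are the distinct elements of $X$, and the maps $x\mapsto x+a$ and $x\mapsto x+2a$ are injective, so the second entries are pairwise distinct and so are the third entries. The simplest choice is $a=0$, giving the $m=|X|$ trivial triples $(x,x,x)$, $x\in X$, which the paper already notes lie in $\mathcal C_M$.

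Applying \Cref{thm:FoxLovasz} with $m=|X|$ and $\varepsilon=|X|/3^M$ yields at least
\[
\varepsilon^{14}3^{2M}=\frac{|X|^{14}}{3^{14M}}\cdot 3^{2M}=\frac{|X|^{14}}{3^{12M}}
\]
cap-set triples in $\{x_1,\ldots,x_m\}^3=X^3$. This is exactly the claimed bound.

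No step here is a genuine obstacle. The only thing to check is that the diagonal triples $(x,x,x)$ satisfy the disjointness definition, and they do, since each of the three coordinate lists is just an enumeration of $X$ without repetition.
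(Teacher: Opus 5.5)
Your proof is correct and is essentially the paper's argument: apply \Cref{thm:FoxLovasz} to the $m=|X|$ pairwise disjoint diagonal triples $(x,x,x)$, $x\in X$, and compute $\varepsilon^{14}3^{2M}=|X|^{14}/3^{12M}$. The detour through general translates $(x,x+a,x+2a)$ is unnecessary, and for $a\neq 0$ it would only give triples in $X\times(X+a)\times(X+2a)$ rather than in $X^3$; only the choice $a=0$ is actually used.
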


\begin{proof}
   We may assume that $X\neq \varnothing$. Let $|X|=m\in \mathbb N$; say, $X=\{x_1,\ldots,x_m\}$. Then $(x_1,x_1,x_1),\ldots, (x_m,x_m,x_m)$ are $m$ pairwise disjoint cap-set triples in $[3]^M$; thus, by \Cref{thm:FoxLovasz},
   $X^3$ contains at least $(m/3^{M})^{14}\cdot 3^{2M}=m^{14}/3^{12M}$ cap-set triples.
\end{proof}

The key technical step in the proof of \Cref{lem:indcounting} is the following. It shows that for all $s,t\in \mathbb N$, every suitably dense set of vertices in $\bt_t^s$ contains three disjoint subsets of polynomially smaller density such that every transversal of them is a hyperedge of $\bt_t^s$.

\begin{lemma}\label{lem:zerosumtriples}
   Let $s,t\in \mathbb N$ and let $d\in \mathbb R$ with
 $2(2t)^s\leq d\leq \left(3^t/(2s)\right)^{\! 1/14}$. Then, for every $X\subseteq V(\bt_t^s)$ with $|X|\geq 3^{st}/d$, there exist $X_1,X_2,X_3\subseteq X$ with $|X_1|,|X_2|,|X_3|\geq 3^{st}/d^{15}$ such that for every $(x^{(1)},x^{(2)},x^{(3)})\in X_1\times X_2\times X_3$, we have
$\{x^{(1)},x^{(2)},x^{(3)}\}\in E(\bt_t^s)$.
In particular, $X_1,X_2,X_3$ are pairwise disjoint.
\end{lemma}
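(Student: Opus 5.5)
The plan is to identify $V(\bt_t^s)=([3]^t)^s$ with $[3]^{st}$, apply \Cref{cor:FoxLovasz} with $M=st$ to get many cap-set triples inside $X^3$, and then pigeonhole these triples according to where they ``split'' in each block. The key observation is the following. Let $(x,y,z)$ be a cap-set triple in $[3]^{st}$, and write $x^j,y^j,z^j\in[3]^t$ for the $j$-th blocks. Suppose that for every $j\in[s]$ the three blocks $x^j,y^j,z^j$ are not all equal. Let $i_j$ be the first position in block $j$ where they are not all equal. By the cap-set condition they then read $\{1,2,3\}$ at position $i_j$, and they agree before it. So $\{x^j,y^j,z^j\}\in E(\bt_t)$ for every $j$, and hence $\{x,y,z\}\in E(\bt_t^s)$.

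\emph{Step 1 (removing degenerate triples).} By \Cref{cor:FoxLovasz}, $X^3$ contains at least $|X|^{14}/3^{12st}\geq 3^{2st}/d^{14}$ cap-set triples. Now count the triples that are degenerate in some block $j$, meaning $x^j=y^j=z^j$. Choosing $x$ freely and $y$ off block $j$ determines the triple, since $z$ is forced. So there are at most $3^{st}\cdot 3^{(s-1)t}$ such triples for each $j$, and at most $s\,3^{2st-t}$ in total. The hypothesis $d^{14}\leq 3^t/(2s)$ gives $s\,3^{2st-t}\leq 3^{2st}/(2d^{14})$. Therefore at least $3^{2st}/(2d^{14})$ cap-set triples in $X^3$ are nondegenerate in every block.

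\emph{Step 2 (classifying by splitting data).} Each nondegenerate triple has a \emph{type} consisting of three pieces of data:
\begin{itemize}
\item the levels $i=(i_1,\dots,i_s)\in[t]^s$;
\item the common prefixes $p=(p_j)$, where $p_j$ has length $i_j-1$;
\item the symbols $a_j=x^j(i_j)$ and $b_j=y^j(i_j)$, with $a_j\neq b_j$; the third symbol $c_j$ is then determined.
\end{itemize}
For a prefix vector $p$ and a symbol vector $a$, let $C(p,a)$ be the set of $x\in X$ whose block $j$ begins with $p_j$ followed by $a_j$, for every $j$. A triple of the given type lies in $C(p,a)\times C(p,b)\times C(p,c)$. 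Every transversal of these three sets is a hyperedge, because by construction the blocks split at level $i_j$ into three distinct symbols. The three sets are also pairwise disjoint, since their vectors of symbols at the levels $i_j$ differ.

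It therefore suffices to find one type with $\min(|C(p,a)|,|C(p,b)|,|C(p,c)|)\geq 3^{st}/d^{15}$, and then take these three sets as $X_1,X_2,X_3$.

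\emph{Step 3 (the counting).} Fix the discrete data $(i,a,b)$. There are at most $t^s6^s$ choices. Let $L=\sum_j(i_j-1)$, so that $|C(p,\cdot)|\leq 3^{st-L-s}$ for every $p$. Given any two coordinates of a cap-set triple, the third is determined. Hence the number of our triples in $C(p,a)\times C(p,b)\times C(p,c)$ is at most
\[
\min\bigl(|C(p,a)|,|C(p,b)|,|C(p,c)|\bigr)\cdot 3^{st-L-s}.
\]
Suppose for contradiction that every such minimum is below $3^{st}/d^{15}$. There are $3^L$ choices of $p$, so the count for fixed $(i,a,b)$ is less than
\[
3^L\cdot 3^{st}d^{-15}\cdot 3^{st-L-s}=3^{2st}/(3^sd^{15}).
\]
Summing over the $(6t)^s$ choices of $(i,a,b)$ gives fewer than $3^{2st}(2t)^s/d^{15}\leq 3^{2st}/(2d^{14})$ triples, using $d\geq 2(2t)^s$. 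This contradicts Step 1.

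The main obstacle is choosing the right per-type upper bound. A bound such as $|C_a||C_b|$ loses too much. What makes the sum telescope to exactly the allowed loss $(2t)^s$ is bounding by (the minimum) times (the maximum possible cell size), with the $3^L$ prefixes cancelling against $3^{-L}$ and the factor $3^{-s}$ absorbing the $3^s$ in $6^s$. I expect this to be the step needing care.
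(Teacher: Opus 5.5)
Your proof is correct and follows the paper's argument essentially step for step. The two share the same Fox--Lov\'asz lower bound with removal of block-degenerate triples, the same classification by splitting level, common prefix and distinct symbols (the paper's $\mathcal{P}_\ell$, with $|\mathcal{P}_\ell|=2^s3^{\sum_i\ell(i)}$), and the same ``minimum times maximal cell size'' bound that sums to the loss $(2t)^s$; the only cosmetic difference is that you argue by contradiction, whereas the paper directly picks the type $p_0$ maximizing $\min\{|X_{p,1}|,|X_{p,2}|,|X_{p,3}|\}$.
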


\begin{proof}
For each $x\in V(\bt_t^s)=\big([3]^{t}\big)^{\! s}$ and every $i\in [s]$, we write
$x_i=x(i)\in V(\bt_t)=[3]^{t}$.
Let $\mca{T}=\mca{T}_1\setminus \mca{T}_2$, where
$$\mca{T}_1=\{(x,y,z)\in X^3: (x_i,y_i,z_i)\in \mca{C}_t \text{ for all }i\in [s]\};$$
$$\mca{T}_2=\{(x,y,z)\in \mca{T}_1: x_i=y_i=z_i \text{ for some }i\in [s]\}.$$
Our first goal is to give a lower bound on $|\mca{T}|$:

\sta{\label{st:lowertriple} We have $|\mca{T}|\geq 3^{2st}/(2d^{14})$.}

For $x\in \left([3]^{t}\right)^{\! s}$, define $\hat{x}\in [3]^{st}$ with $\hat{x}((i-1)t+j)=x_i(j)$ for all $i\in [s]$ and $j\in [t]$; in other words, the entries of $x(1)$ in order are the first $t$ entries of $\hat{x}$, the entries of $x(2)$ in order are the second $t$ entries of $\hat{x}$, and so on. Then the map $x\mapsto \hat{x}$ is bijective from $\left([3]^{t}\right)^{\! s}$ to $[3]^{st}$. Let $\hat{X}=\{\hat{x}:x\in X\}\subseteq [3]^{st}$. Then $(x,y,z)\mapsto (\hat{x},\hat{y},\hat{z})$ is a bijection from $\mca{T}_1$ to $\mca{C}_{st}\cap \hat{X}^3$. Also, since $|\hat{X}|=|X|\geq 3^{st}/d$, it follows from \Cref{cor:FoxLovasz} that $|\mca{C}_{st}\cap \hat{X}^3|\geq 3^{2st}/d^{14}$. Therefore,
$|\mca{T}_1|\geq 3^{2st}/d^{14}$.
On the other hand, to construct a triple $(x,y,z)\in \mca{T}_2$,
\begin{itemize}
    \item there are $s$ choices of $i\in [s]$ for which $x_i=y_i=z_i$, and at most $3^{st}$ choices of $x$;
    \item once $i,x$ are chosen, there are at most $3^{st-t}$ choices of $y$ (because $x_i=y_i$); and
    \item once $i,x$ and $y$ are chosen, there is exactly one choice of $z$ (because $(x,y,z)\in \mca{T}_1$).
\end{itemize}
Hence, $|\mca{T}_2|\leq s\,3^{2st-t}$, which combined with $d^{14}\leq 3^{t}/(2s)$ yields
$|\mca{T}_2|\leq 3^{2st}/(2d^{14})$.
This proves \eqref{st:lowertriple}.
\medskip

Next, we classify the triples in $X^3$ that form a hyperedge of $\bt_t^s[X]$ by the ``hyperedge certificate'' prefixes of each of their $s$ coordinates. For each $\ell\in [t]^s$, we denote by $\mca{P}_{\ell}$ the set of all ordered $3s$-tuples
$$p=(p_{1,1},\ldots,p_{1,s},p_{2,1},\ldots,p_{2,s},p_{3,1},\ldots,p_{3,s})$$
such that for every $i\in [s]$,
\vspace{1mm}

\begin{itemize}
    \item $p_{1,i},p_{2,i},p_{3,i}\in [3]^{\ell(i)}$;
    \item $p_{1,i}(j)=p_{2,i}(j)=p_{3,i}(j)$ for all $j\in [\ell(i)-1]$; and
    \item $p_{1,i}(\ell(i)),p_{2,i}(\ell(i)),p_{3,i}(\ell(i))$ are pairwise distinct; that is,
    $$\{p_{1,i}(\ell(i)),p_{2,i}(\ell(i)),p_{3,i}(\ell(i))\}=\{1,2,3\}.$$
\end{itemize}
We also write
$$\mca{P}=\bigcup_{\ell\in [t]^s}\mca{P}_{\ell}.$$
Let $\ell\in [t]^{s}$ and let $p\in \mca{P}_{\ell}$. Then, for every $k\in [3]$, we define
$$X_{p,k}=\{x\in X:x_i(j)=p_{k,i}(j)\text{ for all }i\in [s]\text{ and }j\in [\ell(i)]\}.$$
It is straightforward to deduce from these definitions that:

\sta{\label{st:prefixcount} The following hold:
\begin{itemize}
\item For every $\ell\in [t]^s$, we have $|\mca{P}_{\ell}|=2^s 3^{\ell(1)+\cdots+\ell(s)}$.
\item For every triple $(x,y,z)\in \mca{T}$, there exist unique $\ell\in [t]^{s}$ and $p\in \mca{P}_{\ell}$ such that $(x,y,z)\in X_{p,1}\times X_{p,2}\times X_{p,3}$. In particular,
    $$|\mca{T}|=\sum_{\ell\in [t]^s}\sum_{p\in \mca{P}_{\ell}}|\mca{T}\cap (X_{p,1}\times X_{p,2}\times X_{p,3})|.$$
\item For each $p\in \mca{P}$ and every triple $(x^{(1)},x^{(2)},x^{(3)})\in X_{p,1}\times X_{p,2}\times X_{p,3}$, we have
$\{x^{(1)},x^{(2)},x^{(3)}\}\in E(\bt_t^s)$.
In particular, $X_{p,1},X_{p,2},X_{p,3}$ are pairwise disjoint.
\end{itemize}}

We further claim that:

\sta{\label{st:uppertriple} There exists $p_0\in \mca{P}$ for which
$|X_{p_0,1}|,|X_{p_0,2}|,|X_{p_0,3}|\geq 3^{st}/d^{15}$.}

For each $p\in \mca{P}$, let
$m_p=\min\{|X_{p,1}|,|X_{p,2}|,|X_{p,3}|\}$.
Fix $\ell\in [t]^{s}$ and $p\in \mca{P}_{\ell}$. Assume without loss of generality that $|X_{p,1}|=m_p$. For a triple $(x,y,z)\in \mca{T}\cap (X_{p,1}\times X_{p,2}\times X_{p,3})$,
\begin{itemize}
    \item there are $m_p$ choices of $x\in X_{p,1}$;
    \item once $x$ is chosen, there are $|X_{p,2}|\leq 3^{st-(\ell(1)+\cdots+\ell(s))}$ choices of $y\in X_{p,2}$; and
    \item once $x$ and $y$ are chosen, there is exactly one choice of $z\in X_{p,3}$ (because $(x,y,z)\in \mca{T}_1$).
\end{itemize}
It follows that
$$|\mca{T}\cap (X_{p,1}\times X_{p,2}\times X_{p,3})|\leq m_p\,3^{st-(\ell(1)+\cdots+\ell(s))}.$$
Choose $p_0\in \mca{P}$ with $m_{p_0}$ as large as possible. Then, by the first two bullets in \eqref{st:prefixcount}, we have
\begin{align*}
    |\mca{T}|&=\sum_{\ell\in [t]^s}\sum_{p\in \mca{P}_{\ell}}|\mca{T}\cap (X_{p,1}\times X_{p,2}\times X_{p,3})|\\
    &\leq \sum_{\ell\in [t]^s}\sum_{p\in \mca{P}_{\ell}}3^{st-(\ell(1)+\cdots+\ell(s))}\cdot m_p\\
    &\leq \sum_{\ell\in [t]^s}|\mca{P}_{\ell}|\cdot 3^{st-(\ell(1)+\cdots+\ell(s))}\cdot m_{p_0}\\
    &\leq \sum_{\ell\in [t]^s}2^s 3^{st}\cdot m_{p_0}\\
    &=(2t)^s 3^{st}\cdot m_{p_0}.
\end{align*}
Combining this with \eqref{st:lowertriple} and the assumption that $d\geq 2(2t)^s$, we deduce that
$$m_{p_0}\geq \mfrac{3^{st}}{2(2t)^sd^{14}}\geq \mfrac{3^{st}}{d^{15}}.$$
This proves \eqref{st:uppertriple}.
\medskip

Now, let $p_0\in \mca{P}$ be as in \eqref{st:uppertriple}. Then $X_{p_0,1},X_{p_0,2},X_{p_0,3}\subseteq X$ each have cardinality at least $3^{st}/d^{15}$, and by the third bullet in \eqref{st:prefixcount}, for every triple $(x^{(1)},x^{(2)},x^{(3)})\in X_{p_0,1}\times X_{p_0,2}\times X_{p_0,3}$, we have $\{x^{(1)},x^{(2)},x^{(3)}\}\in E(\bt_t^s)$; in particular, $X_{p_0,1},X_{p_0,2},X_{p_0,3}$ are pairwise disjoint. This completes the proof of \Cref{lem:zerosumtriples}.
\end{proof}

Finally, we use \Cref{lem:zerosumtriples} to prove \Cref{lem:indcounting}, which we restate:

\indcounting*

\begin{proof}
Let $k\geq 6$ be the (unique) integer for which
$3^{(2s)^k}\leq n<3^{(2s)^{k+1}}$. We prove that
$$t=(2s)^{k+2}$$
satisfies the lemma. Let $d=n^{1/15}\geq 1$. We first show that:

\sta{\label{st:dbounds} We have $2(2t)^s\leq d<\left(3^t/(2s)\right)^{\! 1/14}$.}

For the upper bound, since $n<3^{(2s)^{k+1}}$, it follows that
$$2sd^{14}\leq 2sd^{15}=2sn<2s\cdot 3^{(2s)^{k+1}}
\leq 3^{2s}\cdot 3^{(2s)^{k+1}}\leq 3^{2(2s)^{k+1}}\leq 3^{(2s)^{k+2}}=3^t.$$
For the lower bound, since $n\geq 3^{(2s)^k}$ and $s\geq 8$, it follows that
$d=n^{1/15}>3^{(2s)^{k-1}}$.
Also, note that $3^{m}\geq m^{2}$ for every integer $m\geq 2$. Thus, since $k\geq 6$ and $t>4$, we have
$$3^{(2s)^{k-1}}=\left(3^{(2s)^{k-2}}\right)^{\! 2s}\geq \left((2s)^{2k-4}\right)^{\! 2s}
\geq \left((2s)^{k+2}\right)^{\! 2s}=t^{2s}\geq (4t)^s\geq 2(2t)^s.$$
This proves \eqref{st:dbounds}.
\medskip

The following inductive argument is at the heart of the proof of \Cref{lem:indcounting}. It obtains, for every $X\subseteq V(\bt_t^s)$ of controlled density, a small family of small ``containers'' for all stable sets in $\bt_t^s[X]$.

\sta{\label{st:ternaryinduction} Let $\nu\in \mathbb N$ and let $X\subseteq V(\bt_t^s)$ with $|X|\leq (\nu 3^{st})/d^{15}$. Then there is a set $\mca{S}$ of at most $3^{\nu-1}$ subsets of $X$, each of cardinality at most $3^{st}/d$, such that every stable set in $\bt_t^s[X]$ is a subset of one of the sets in $\mca{S}$.}

We proceed by induction on $\nu$. If $|X|\leq 3^{st}/d$, then \eqref{st:ternaryinduction} follows by taking $\mca{S}=\{X\}$. So, we may assume that $|X|>3^{st}/d$; in particular, $|X|>3^{st}/d^{15}$ (because $d\geq 1$), and thus $\nu\geq 2$. Since $|X|\geq 3^{st}/d$, by \Cref{lem:zerosumtriples}, there exist $X_1,X_2,X_3\subseteq X$ with $|X_1|,|X_2|,|X_3|\geq 3^{st}/d^{15}$ such that for every $(x^{(1)},x^{(2)},x^{(3)})\in X_1\times X_2\times X_3$, we have $\{x^{(1)},x^{(2)},x^{(3)}\}\in E(\bt_t^s)$. In particular, every stable set in $\bt_t^s[X]$ is contained in one of $X\setminus X_1,X\setminus X_2,X\setminus X_3$. For each $i\in [3]$, since $|X_i|\geq 3^{st}/d^{15}$ and $|X|\leq (\nu 3^{st})/d^{15}$, it follows that $|X\setminus X_i|\leq ((\nu-1)3^{st})/d^{15}$. Therefore, by the inductive hypothesis, for every $i\in [3]$, there is a set $\mca{S}_i$ of at most $3^{\nu-2}$ subsets of $X\setminus X_i$, each of cardinality at most $3^{st}/d$, such that every stable set in $\bt_t^s[X\setminus X_i]$ is a subset of one of the sets in $\mca{S}_i$. Now the result follows by taking $\mca{S}=\mca{S}_1\cup \mca{S}_2\cup \mca{S}_3$. This proves \eqref{st:ternaryinduction}.
\medskip

Let $\nu=n=d^{15}$. Then $|V(\bt_t^s)|=3^{st}\leq (\nu 3^{st})/d^{15}$. By \eqref{st:ternaryinduction}, there is a set $\mca{S}$ of at most $3^{\nu-1}<3^{n}$ subsets of $V(\bt_t^s)$, each of cardinality at most $3^{st}/d$, such that every stable set in $\bt_t^s$ is a subset of one of the sets in $\mca{S}$. In particular, every stable $n$-tuple of vertices in $\bt_t^s$ is a stable $n$-tuple of vertices in $\bt_t^s[S]$ for some $S\in \mca{S}$. Consequently,
$$\mf{t}_n(\bt_t^s)\leq \sum_{S\in \mca{S}}\mf{t}_n(\bt_t^s[S])\leq \sum_{S\in \mca{S}}|S|^n< 3^{n}\cdot \left(\dfrac{3^{st}}{d}\right)^n=\left(\dfrac{3^{st+1}}{d}\right)^n.$$
Furthermore, the lower bound on $d$ from \eqref{st:dbounds} yields in particular that $d>9$. Hence,
$$\mf{t}_n(\bt_t^s)\leq\left(\dfrac{3^{st+1}}{d}\right)^n<\left(\dfrac{3^{st}}{d^{1/2}}\right)^n=\left(\dfrac{3^{st}}{n^{1/30}}\right)^n.$$
This completes the proof of \Cref{lem:indcounting}.
\end{proof}

\bibliographystyle{abbrv}
\bibliography{ref}
\end{document}